\newcommand{\up}{uncertainty principle}
\newcommand{\tfa}{time-frequency analysis}
\newcommand{\tf}{time-frequency}
\newcommand{\fif}{if and only if}
\newcommand{\tfs}{time-frequency shift}
\newcommand{\modsp}{modulation space}
\newtheorem{tm}{Theorem}[section]
\newtheorem{lemma}[tm]{Lemma}
\newtheorem{prop}[tm]{Proposition}
\newcommand{\rems}{\noindent\textsl{REMARKS:}}
 \theoremstyle{definition}
\newcommand{\beqa}{\begin{eqnarray*}}
\newcommand{\eeqa}{\end{eqnarray*}}
\newcommand{\field}[1]{\mathbb{#1}}
\newcommand{\bR}{\field{R}}        
\newcommand{\bN}{\field{N}}        
\newcommand{\bZ}{\field{Z}}        
\newcommand{\bC}{\field{C}}        
 \def\cB{\mathcal{B}}
 \def\cG{\mathcal{G}}
 \def\cC{\mathcal{C}}
 \def\cO{\mathcal{O}}
\def\rd{\bR^d}
\def\rdd{{\bR^{2d}}}
\def\zdd{{\bZ^{2d}}}
\def\lrd{L^2(\rd)}
\def\intrdd{\int_{\rdd}}
\def\<{\left<}
\def\>{\right>}
\def\inv{^{-1}}
\def\mv1{M_v^1}
\newcommand{\vs}{\vspace{3 mm}}
\newcommand{\gablam}{\cG (g, \Lambda )}
\newcommand{\gablamn}{\cG (g, \Lambda _n )}
\newcommand{\ggf}{_{g,\Lambda}}
\newcommand{\kor}{\textrm{ker}\, D_{g,\Lambda}}
\begin{document}
\begin{abstract}
We investigate finite sections of Gabor frames and study the
asymptotic behavior of  their lower
Riesz bound. From a numerical point of view, these sets of  \tfs s
are linearly dependent, whereas from a  rigorous analytic point of view, they are conjectured to be
linearly independent. 
\end{abstract}

\newcommand{\cvi}{\mathcal{C}_v ^\infty}
\newcommand{\cv}{\mathcal{C}_v}

\title{Linear Independence of Time-Frequency Shifts?}
\author{Karlheinz Gr\"ochenig}
\address{Faculty of Mathematics \\
University of Vienna \\
Oskar-Morgenstern-Platz 1 \\
A-1090 Vienna, Austria}
\email{karlheinz.groechenig@univie.ac.at}
\subjclass[2010]{42C15,42C30, 46H30}
\date{}
\keywords{\tfs , linear independence, spectral invariance, matrix
  algebra, modulation space, Gabor frame}
\thanks{K.\ G.\ was
  supported in part  by the  project P26273 - N25  of the
 Austrian Science Foundation (FWF) and 
by the project  ICT 10-066 ``NOWIRE'' of the
Vienna Science and Technology Fund (WWTF)}
\maketitle

\section{Introduction}

A famous  conjecture of Heil, Ramanathan, and Topiwala~\cite{HRT96},
often called the HRT-conjecture,  states that finitely
many \tfs s of a non-zero  $L^2$-function are linearly independent. 
Denoting a time-frequency shift of $g\in \lrd $  along  $z=(x,\xi ) \in \rdd $ by 
$$
\pi (z) g(t) = M_\xi T_x g(t) = e^{2\pi i \xi\cdot t} g(t-x), \qquad
\qquad t\in \rd\, ,
$$
the question is whether 
$$
\sum _{j=1}^n c_j \pi (z_j) g = 0    \quad \Longrightarrow c_j = 0
\qquad \forall j \, , 
$$
for arbitrary points  $z_1, \dots , z_n\in  \rdd $. 
    
To this day this   conjecture is open, it is known to be true only
under restrictive conditions on either $g$ or  the set $\{z_j \}$.

(a) Linnell's Theorem~\cite{Lin99}: Let $\Lambda \subseteq \rdd $ be a lattice and
$g\in \lrd $ arbitrary, then for every finite subset $F\subseteq \Lambda $ the
set $\{\pi (\lambda ) g: \lambda \in F\}$ is linearly
independent. 
This is a deep result obtained with  von Neumann algebra techniques;  special cases have been
 reproved with  more analytic arguments in~\cite{BS10,DG13a}. 

(b) Bownik and Speegle ~\cite{BS13} proved the HRT-conjecture for
$g$ with one-sided  super-exponential decay. This result contains the
early results of~\cite{HRT96}.    

In view of these general results, it is rather surprising that  it is
not  known whether  four  arbitrary \tfs s of $g\in \lrd
$  are linearly independent.  Even for rather  special
constellations the linear independence of four \tfs s is highly
non-trivial~\cite{DZ12}. 

Further contributions to the HRT-conjecture investigate the kernel of a linear combination of \tfs\
operators~\cite{balan08} and estimates of the  frame  bounds of 
finite sets of \tfs s~\cite{CL01}. 

For a detailed survey of the linear independence conjecture we refer
to Heil's article~\cite{heil06}.

In this note we adopt  a different point of view and investigate  the
numerical linear independence of \tfs s. In other words, can we
determine numerically whether a given finite set of \tfs s is linearly
independent?  We will argue that the  answer is negative.  To formulate
a precise result, we will study  the lower Riesz bound
of finite sections of a Gabor frame  and estimate its asymptotics. 
By taking larger and larger finite sections, the lower Riesz converges
to zero, and in many cases this convergence is super-fast. Thus from a
numerical point of view  even
 small sets of \tfs s may look linearly dependent. 
 The main
result will illustrate the spectacular difference between a conjectured
mathematical truth and a computationally observable  truth. 

Let us explain the problem in detail. Let $\lambda = (\lambda _1,
\lambda _2) \in \rd \times \rd \simeq \rdd $ be a point in the \tf\
plane (or phase space in the terminology of physics). The \tfs\ $\pi
(\lambda )$ acts on a function $g\in \lrd $ by 
$$
\pi (\lambda ) f(t) = e^{2\pi i \lambda _2\cdot t} g(t-\lambda _1) \,
.
$$
For fixed $g \in \lrd $ and  a countable subset $\Lambda \subseteq
\rdd $, the set $\gablam = \{ \pi  (\lambda ) g:
\lambda \in \Lambda \}  $ is called a Gabor system, and for $n>0$ the
set 
$$
\gablamn = \cG (g, \Lambda \cap B_n(0)) = \{ \pi (\lambda ) g:
\lambda \in \Lambda , |\lambda | \leq n\}
$$
is a \emph{finite section } of $\gablam $. 
We are interested in the quantity
\begin{equation}
  \label{eq:10}
  A_n = A(g, \Lambda _n) = \min _{c\neq 0} \frac{\|\sum _{|\lambda | \leq
    n} c_\lambda \pi (\lambda ) g\|_2^2}{\sum _{|\lambda |\leq n}
  |c_\lambda |^2} \, .
\end{equation}
Since $\gablamn$ spans a finite-dimensional subspace of $\lrd $, the
minimum exists. Moreover, $A_n=
0$, \fif\  $\gablamn $ is linearly dependent. Thus we may take $A_n$ as a quantitative measure for the numerical
linear dependence  of $\gablamn $.

Our main result is an asymptotic estimate for $A_n$ as $n\to \infty
$.  Before formulating this estimate, we need to explain some of the
basic concepts of Gabor analysis and \tfa . 
We refer to the textbooks~\cite{chr03,book,heil11} for detailed
expositions of \tfa\ and frame theory.

A Gabor system $\gablam $ is a frame, a so-called Gabor frame, if
there exist frame bounds $A,B>0$, such that 
$$
A \|f\|_2^2 \leq  \sum _{\lambda \in \Lambda } |\langle f, \pi
(\lambda ) g\rangle |^2  \leq B \|f\|_2^2  \quad \forall f\in \lrd \,
.
$$
For an equivalent and more suitable condition we define the synthesis
operator $D_{\ggf }  $ 
$$
D_{\ggf } c = \sum _{\lambda \in \Lambda } c_\lambda \pi (\lambda )g \, ,
$$
which is well-defined  on  finite sequences $c$. 
Then $\gablam $ is a frame, \fif\ $D_{\ggf } : \ell ^2(\Lambda ) \to \lrd
$ is bounded and onto $\lrd $. 

 If, in addition to the frame property,   $\kor = \{0\}$, then $\gablam $
is a Riesz basis for $\lrd $.  In this case there exist $A',B' >0$,
such that 
$$
A' \|c\|_2^2 \leq \|\sum _{\lambda \in \Lambda } c_\lambda \pi
(\lambda )g\|_2^2 \leq B' \|c\|_2^2 \quad \forall c\in \ell ^2(\Lambda
) \, .
$$
In other words, a Riesz sequence  is $\ell ^2$-linearly
independent. In particular, every finite subset of a Riesz sequence
$\gablam $ is linearly independent. 

If $\gablam $ is a frame, but not a Riesz basis,  then by definition
$\kor \neq \{0\}$. However,  if the linear
independence conjecture  is true, then certainly $\mathrm{ker}\, D_{g,
  \Lambda _n} = \{ 0 \}$ for all $n\in \bN $. This means that for
$n\to \infty $,  the finite sets $\gablamn $ must
 get ``more and more linearly dependent''. Quantitatively,  this means
 that the lower Riesz bound  $A_n $ must 
 tend to $0$. 

Our main theorem  shows that this transition to linear dependence  may happen
 very fast. 

\begin{tm} \label{tm-main}
Let $v: \rdd \to \bR ^+$ be a submultiplicative weight function such
that  $\lim _{n\to \infty } v(nz)^{1/n} =1 $ for all
$z\in \rdd $ ($v$ satisfies the Gelfand-Raikov-Shilov condition).

  Assume that
  \begin{equation}
    \label{eq:11}
    \intrdd |\langle g, \pi (z) g\rangle | v(z) \, dz < \infty \, .
  \end{equation}
If  $\gablam $ is a frame for $\lrd $,  but not   a Riesz basis, then
the lower Riesz bound $A_n$ of $\gablamn$ decays like 
\begin{equation}
  \label{eq:3}
  A_n \leq C  \sup _{|\lambda | >n } v(\lambda )^{-2} \, .
\end{equation}
\end{tm}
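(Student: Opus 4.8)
The plan is to prove \eqref{eq:3} by exhibiting, for each large $n$, a single nonzero finite sequence supported in $\Lambda\cap B_n(0)$ whose Rayleigh quotient in \eqref{eq:10} is $O\bigl(\sup_{|\lambda|>n}v(\lambda)^{-2}\bigr)$. The candidate is a truncated kernel vector of the synthesis operator: if $c\in\ker D_{g,\Lambda}$, $c\neq 0$, and $b^{(n)}:=c\,\mathbf{1}_{\{|\lambda|\le n\}}$, then $D_{g,\Lambda}b^{(n)}=-D_{g,\Lambda}\bigl(c\,\mathbf{1}_{\{|\lambda|>n\}}\bigr)$, so that $\|D_{g,\Lambda}b^{(n)}\|_2^2\le B\sum_{|\lambda|>n}|c_\lambda|^2$ with $B$ the upper frame bound, while $\|b^{(n)}\|_2^2\to\|c\|_2^2>0$. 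By itself this only yields $A_n\to 0$; to obtain the rate \eqref{eq:3} one needs a kernel vector $c$ with $v$-weighted $\ell^2$-decay, $\|c\|_{\ell^2_v}^2:=\sum_\lambda|c_\lambda|^2v(\lambda)^2<\infty$, because then $\sum_{|\lambda|>n}|c_\lambda|^2\le\bigl(\sup_{|\lambda|>n}v(\lambda)^{-2}\bigr)\|c\|_{\ell^2_v}^2$, and dividing by $\|b^{(n)}\|_2^2$ (which exceeds $\tfrac12\|c\|_2^2$ once $n$ is large) gives \eqref{eq:3} with $C=2B\|c\|_{\ell^2_v}^2/\|c\|_2^2$ for large $n$, the finitely many remaining $n$ being absorbed into $C$ via the trivial bound $A_n\le B$.

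So the substance is to produce a nonzero kernel vector lying in $\ell^2_v(\Lambda)$. I would reformulate via the Gram matrix $G=D_{g,\Lambda}^*D_{g,\Lambda}$ on $\ell^2(\Lambda)$, whose entries $G_{\lambda\mu}=\langle\pi(\mu)g,\pi(\lambda)g\rangle$ satisfy $|G_{\lambda\mu}|=|\langle g,\pi(\mu-\lambda)g\rangle|$, and which has $\ker D_{g,\Lambda}=\ker G$. Since $\gablam$ is a frame, $A\,\Id\le D_{g,\Lambda}D_{g,\Lambda}^*\le B\,\Id$, hence $\sigma(G)\subseteq\{0\}\cup[A,B]$; since $\gablam$ is not a Riesz basis, $\ker G\neq\{0\}$, so $0$ is an \emph{isolated} point of $\sigma(G)$. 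Next, hypothesis \eqref{eq:11} is precisely the condition $g\in\mvv$, and in particular the Gram matrix $G$ belongs to the Gr\"ochenig--Leinert matrix algebra $\cv$ of operators with $v$-weighted off-diagonal decay (this uses that $\Lambda$ is relatively separated, which holds automatically whenever $\gablam$ is a frame). Because $v$ satisfies the GRS condition, $\cv$ is inverse-closed in $\cB(\ell^2(\Lambda))$, so the Riesz spectral projection $P=\frac1{2\pi i}\oint_{|z|=r}(z\,\Id-G)^{-1}\,dz$ with $0<r<A$ --- which, $G$ being self-adjoint, equals the orthogonal projection onto $\ker G$ --- lies in $\cv$, the contour integral converging in the $\cv$-norm since $z\mapsto(z\,\Id-G)^{-1}$ is $\cv$-continuous off $\sigma(G)$.

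It then remains to read off the desired kernel vector from $P$. As $\ker G\neq\{0\}$ we have $P\neq 0$, so there is $\mu\in\Lambda$ with $c:=Pe_\mu\neq 0$, and clearly $c\in\ker G=\ker D_{g,\Lambda}$. Membership $P\in\cv$ gives $|P_{\lambda\mu}|\le h(\lambda-\mu)$ for some $h\ge 0$ with $\|h\|_{\ell^1_v}<\infty$; hence $\sum_\nu h(\nu)^2v(\nu)^2\le\|hv\|_\infty\|h\|_{\ell^1_v}\le\|h\|_{\ell^1_v}^2$, and submultiplicativity $v(\lambda)\le v(\mu)\,v(\lambda-\mu)$ (with $\mu$ now fixed) gives $\|c\|_{\ell^2_v}^2=\sum_\lambda|c_\lambda|^2v(\lambda)^2\le v(\mu)^2\|h\|_{\ell^1_v}^2<\infty$. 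This is exactly the $c$ required in the first paragraph, and feeding it into the truncation estimate there yields \eqref{eq:3}.

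The step I expect to carry the weight is the middle one: showing that the spectral projection $P$ itself inherits $v$-weighted off-diagonal decay. That is precisely where the two soft hypotheses must be lined up --- the frame property (not mere completeness of $\gablam$) is what produces the spectral gap of $G$ at $0$ and hence makes the Riesz projection available at all, while the GRS condition on $v$ is exactly what makes $\cv$ inverse-closed so that $P$ stays in $\cv$. By contrast, the Gram-matrix reformulation, the identity $D_{g,\Lambda}b^{(n)}=-D_{g,\Lambda}(c\,\mathbf{1}_{\{|\lambda|>n\}})$, the extraction of a weighted-$\ell^2$ column from $P\in\cv$, and the closing Rayleigh-quotient estimate are all routine.
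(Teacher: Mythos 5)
Your proposal is correct and follows essentially the same route as the paper: truncate a decaying kernel vector to bound the Rayleigh quotient, reformulate via the Gram matrix whose spectrum has a gap $\sigma(G)\subseteq\{0\}\cup[A,B]$, and use the Riesz projection together with the inverse-closedness of the convolution-dominated algebra $\cC_v$ (GRS condition) to show that a column of $P$ gives a kernel vector with $v$-weighted decay. The only cosmetic difference is that you work with $\ell^2_v$-decay of that kernel vector, while the paper records it as $\ell^1_v$-decay and converts to the squared estimate in the final step.
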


For the polynomial weight $v(z) = (1+|z|)^s$,  the lower bound decays 
like $A_n  = \cO ( n^{-2s} ) $, and for the sub-exponential weight 
$v(z) = e^{a|z|^b}$ for $a>0$ and $0<b<1$ we have $A_n = \cO (
e^{-an^b}) $. This means that the lower bound $A_n $ tends to zero
almost exponentially. The finite Gabor system $\gablamn$ is extremely
badly conditioned,  and numerically $\gablamn $ behaves like a linearly
dependent set. On the other hand, 
if $\Lambda $ is a lattice, then by  Linnell's theorem  $\gablamn$ is
always linearly independent. Theorem~\ref{tm-main} states a striking
contrast between  the
numerical linear dependence of finite sets of \tfs s and their conjectured abstract  linear independence. 

\vs

In the remainder of this note we prepare the necessary background on
\tfa\ and spectral invariance of matrix algebras and then prove
Theorem~\ref{tm-main} and a  variation. The proof will be
relatively short, but it combines several non-trivial 
statements from harmonic analysis. In a sense, we extend the
quantitative analysis of the finite section method in~\cite{GRS10} to
elements in the kernel of a matrix. 

\vs

\textbf{Operators related to Gabor systems.} If $D_{\ggf } $ is bounded from $\ell ^2(\Lambda ) $ to $\lrd $, then
$\gablam $ is  called a Bessel sequence. Its adjoint operator
is the analysis operator $D^*_{\ggf } f = \big( \langle f, \pi
(\lambda ) g\rangle : \lambda \in \Lambda \big)\in \ell ^2(\Lambda ) $
for $f\in \lrd $. 

We also consider the frame
operator of $\gablam $ defined to be
\begin{equation}
  \label{eq:12}
  S_{\ggf } f = D_{\ggf }  D_{\ggf } ^* f = \sum _{\lambda \in \Lambda } \langle
  f, \pi (\lambda ) g\rangle \pi (\lambda ) g \,
\end{equation}
for $f$ in  a suitable space of test functions. The  Gram matrix is 
the matrix   $ G_{\ggf }  =  D_{\ggf } ^*  D_{\ggf }$ acting on $\ell
^2(\Lambda )$  with entries 
$$
(G_{\ggf } )_{\lambda , \mu } = \langle
\pi (\mu )g, \pi (\lambda )g\rangle  \, \qquad \lambda , \mu \in
\Lambda \, .
$$


The algebraic identity 
\begin{align*}
  \| \sum _{|\lambda | \leq n } c_\lambda \pi (\lambda
  )g\|_2^2 = \sum _{|\lambda | , |\mu | \leq n} \langle \pi (\lambda
  ) g, \pi (\mu )g\rangle c_\lambda \overline{c_\mu } 
\end{align*}
shows that the Riesz bounds of $\gablamn$ are just the extremal
eigenvalues of the finite sections of the  Gramian matrix of $\gablam
$. 

\vs

\textbf{Weights and \modsp s.} To measure the \tf\ concentration of a
function, we use weighted \modsp s. In \tfa\ 
one uses  the several   conditions for weight  functions~\cite{gro07c}: \\ 
(i) a weight  $v: \rdd \to \bR ^+$ is \emph{submultiplicative}, if  $v(z_1+z_2) \leq 
v(z_1) v(z_2) $ for all $z_1,z_2\in \rdd $, and \\
(ii) $v$ is  \emph{subconvolutive}, if $(v\inv \ast v\inv )(z) \leq C
v(z)\inv $ for all $z\in \rdd $. \\
(iii) A weight $v$ satisfies the \emph{Gelfand-Raikov-Shilov (GRS) condition}
$$\lim _{n\to \infty } v(nz)^{1/n} =1  \quad \text{ for all } z\in
\rdd  \, .
$$ 
The main examples for weights  are the polynomial weights $z\mapsto 
(1+|z|)^s$ for $s\geq 0$  and the sub-exponential weights $z\mapsto 
e^{a|z|^b}$ for $a>0$ and $0<b<1$. The exponential weight $z\mapsto 
e^{a|z|}$ for $a>0$ does not satisfy the GRS-condition.

Let $\phi (t) = e^{-\pi t^2}$ be the Gaussian and $v$  a weight
function  on $\rdd $.  A function $g$ belongs to the  \modsp\
$M^1_v(\rd )$, if
$$
\|g\|_{M^1_v} := \intrdd |\langle g, \pi (z) \phi \rangle | \, v(z) \, dz < \infty \,
.
$$
 Likewise
$g\in M^\infty _v(\rd )$, if   
$$ \|g\|_{M^\infty _v} := \sup _{z\in \rdd } |\langle g, \pi (z) \phi \rangle | \, v(z) <
\infty \, .
$$
From  the  theory of \modsp s we need the following facts
about the \modsp s $M^1_v$ and $M^\infty _v$. See  \cite{book} and
\cite{feiSTSIP} for a historical survey about \modsp s.

\begin{lemma} \label{amalg}
   (A)  Assume that $v$ is a submultiplicative weight on $\rdd $.  Then
the following conditions are equivalent:

(i)  $g\in M^1_v (\rd )$ 

(ii)   $\intrdd
|\langle g, \pi (z) g \rangle | \, v(z) \, dz < \infty $. 

(iii)  The function $z\mapsto  \langle g, \pi (z) g\rangle $
belongs to the amalgam space $W(C, \ell ^1_v)$, i.e., it is continuous and  
\begin{equation}
  \label{eq:c7}
\sum _{k\in \zdd } \sup _{z\in [0,1]^{2d}} |\langle g, \pi
(k+z)g\rangle |  v(k) < \infty \, .  
\end{equation}

(B)  Assume that  $v$ is submultiplicative and subconvolutive.  Then $g\in M^\infty _v(\rd )$ \fif\
$ \sup _{z\in \rdd }|\langle g, \pi (z) g \rangle | \, v(z) <\infty $. 
 \end{lemma}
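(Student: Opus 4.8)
The plan is to derive both parts from the two standard pointwise bounds for the short-time Fourier transform $V_h g(z):=\langle g,\pi(z)h\rangle$. Writing $\phi$ for the Gaussian window, the inversion formula together with the covariance relation $\pi(z)\pi(u)=c(z,u)\pi(z+u)$ ($|c(z,u)|=1$) gives, for any $L^2$ windows $\psi,\gamma$ with $\langle\gamma,\psi\rangle\neq0$,
\[
|V_\phi g(z)|\ \le\ \frac{1}{|\langle\gamma,\psi\rangle|}\,\bigl(|V_\psi g|*|V_\phi\gamma|\bigr)(z),
\]
while the fundamental identity of \tfa\ gives $|V_h g(z)|=|V_g h(-z)|$. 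Specializing $\psi=\gamma=\phi$ with output window $g$ and using the identity yields the key estimate
\[
|V_g g(z)|\ \le\ \|\phi\|_2^{-2}\,\bigl(|V_\phi g|*(V_\phi g)^{\vee}\bigr)(z),\qquad (V_\phi g)^\vee(z):=|V_\phi g(-z)|.
\]
Throughout I use that a submultiplicative weight grows at most exponentially, so $|V_\phi\phi(z)|=c_d\,e^{-\pi|z|^2/2}$ lies in the Wiener amalgam space $W(C,\ell^1_v)$ for \emph{every} submultiplicative $v$; I also take $v$ symmetric, which costs nothing for the weights relevant to this paper.

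\textbf{Part (A).} First I would record the auxiliary equivalence $g\in M^1_v(\rd)\fif V_\phi g\in W(C,\ell^1_v)$: the nontrivial direction follows from the reproducing inequality $|V_\phi g|\le|V_\phi g|*|V_\phi\phi|$, the amalgam convolution relation $L^1_v*W(L^\infty,\ell^1_v)\subseteq W(L^\infty,\ell^1_v)$ (see \cite{book}), and continuity of $V_\phi g$; the reverse direction is the inclusion $W(C,\ell^1_v)\subseteq L^1_v$. Then $(i)\Rightarrow(iii)$: since $W(C,\ell^1_v)$ is a convolution algebra closed under $z\mapsto-z$, the key estimate and this equivalence give $V_g g\in W(C,\ell^1_v)$, and $V_g g$ is continuous, which is $(iii)$. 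The implication $(iii)\Rightarrow(ii)$ is trivial. For $(ii)\Rightarrow(i)$ I would apply the first pointwise bound with output window $\phi$, $\psi=g$, and $\gamma=\pi(w_0)\phi$, where $V_\phi g(w_0)\neq0$ (such $w_0$ exists since $g\neq0$); then $|V_\phi g|\le C\,\bigl(|V_g g|*|V_\phi\phi(\cdot-w_0)|\bigr)$, and since $|V_g g|\in L^1_v$ by $(ii)$ and the second factor is a shifted Gaussian, $L^1_v*L^1_v\subseteq L^1_v$ forces $V_\phi g\in L^1_v$, i.e.\ $g\in M^1_v$.

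\textbf{Part (B).} The argument runs in parallel with $L^1_v$ replaced by the decay space $\{h:\sup_z|h(z)|v(z)<\infty\}$. For the forward direction, $\sup_z|V_\phi g(z)|v(z)<\infty$ and the key estimate give $|V_g g(z)|\le C\,(v^{-1}*v^{-1})(z)$, so the subconvolutivity $(v^{-1}*v^{-1})(z)\le Cv(z)^{-1}$ is exactly what closes the estimate; this is the only place subconvolutivity is used. For the reverse direction one uses $|V_\phi g(z)|\le C\,\bigl(|V_g g|*|V_\phi\phi(\cdot-w_0)|\bigr)(z)$ and $|V_g g(z)|\le Cv(z)^{-1}$ together with the elementary consequence $v(z-u)^{-1}\le v(z)^{-1}v(u)$ of submultiplicativity, which bounds the convolution by $Cv(z)^{-1}\intrdd v(u)\,|V_\phi\phi(u-w_0)|\,du\le C'v(z)^{-1}$ since the shifted Gaussian has finite weighted $L^1$ norm; hence $g\in M^\infty_v$.

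\textbf{Main obstacle.} The one genuinely non-formal ingredient is the self-improvement from $L^1_v$-integrability (resp.\ $v^{-1}$-decay) of an STFT to membership in the Wiener amalgam space $W(C,\ell^1_v)$ --- concretely the relation $L^1_v*W(L^\infty,\ell^1_v)\subseteq W(L^\infty,\ell^1_v)$ applied to $|V_\phi g|*|V_\phi\phi|$. This exploits the special structure of $V_\phi g$: its reproducing kernel $V_\phi\phi$ is a Gaussian, hence has amalgam-integrable decay (equivalently, $V_\phi g$ is a Gaussian multiple of the Bargmann transform of $g$, so it obeys a local $L^1$-to-sup estimate). Everything else --- the inversion and covariance formulae, tracking the unimodular phases $c(z,u)$, and the weight manipulations --- is routine.
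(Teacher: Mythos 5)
Your argument is correct, and since the paper itself gives no proof beyond the citation of \cite{book} (Propositions~12.1.2, 12.1.11 and Theorem~13.5.3), your derivation via the change-of-window/orthogonality-relation estimate $|V_{g_0}f|\le |\langle \gamma,\psi\rangle|^{-1}\,|V_\psi f|\ast|V_{g_0}\gamma|$ combined with the amalgam convolution relation $L^1_v\ast W(L^\infty,\ell^1_v)\subseteq W(L^\infty,\ell^1_v)$ is essentially the same standard route by which those cited results are proved. The only caveat, your standing symmetry assumption on $v$ (used to reflect $V_\phi g$ and to ensure $v\geq 1$), is harmless for the weights relevant here, as you yourself note.
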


For a proof see~\cite{book}, Propositions~12.1.2, 12.1.11 and
Theorem~13.5.3.  

Note that condition~\eqref{eq:11} in 
Theorem~\ref{tm-main} amounts to saying that $g\in M^1_v(\rd )$. 

\vs

\textbf{Spectral invariance of matrices with off-diagonal decay.} 
Let $\Lambda $ be a countable set in $\rdd $ satisfying the condition
$$
\max _{z\in \rdd }  \# \{ \lambda \in \Lambda : |\lambda - z | \leq
1\} < \infty \, .
$$
$\Lambda $ is said to be relatively separated. Let $v$ be a
submultiplicative weight on $\rdd $. 

We will use the following classes of infinite matrices over the index set  $\Lambda $. 

(i) The class $\cC _v^\infty (\Lambda )$ consists of  matrices $A =
(a_{\lambda \mu })_{\lambda ,\mu \in \Lambda }$  with
\emph{off-diagonal decay} $v\inv $ and is equipped with  the norm
\begin{equation}
  \label{eq:5}
  \|A\|_{\cvi } =  \sup _{\lambda ,\mu \in \Lambda } |a_{\lambda \mu }|
  v(\lambda -\mu ) \, .
\end{equation}
For polynomials weights $v(z) = (1+|z|)^s$, $\cvi $  is often called the
Jaffard class.

(ii)  A matrix $A $ belongs to 
the class $\cv = \cv (\Lambda )$ of \emph{convolution-dominated matrices}, if there
exists an envelope function $\Theta \in W(C,\ell ^1_v)$, such that
$$
|a_{\lambda \mu } | \leq \Theta (\lambda -\mu ) \qquad \forall \lambda
, \mu \in \Lambda \, .
$$
The norm on $\cv $ is $\|A\|_{\cv } = \inf \{ \|\Theta \|_{W(C,\ell
  ^1_v)} : \Theta \,\, \text{ is an envelope }\}$.  

If $v$ is submultiplicative, then $\cv $ is a Banach algebra. If $v\inv
\in \ell ^1(\Lambda ) $ and $v $ is subconvolutive, then $\cvi $
is a Banach algebra. Both algebras can be embedded into the
$C^*$-algebra of bounded operators  $\cB( \ell ^2(\Lambda ))$. 

The most important result about these matrix algebras is their
spectral invariance asserting that  the off-diagonal decay is preserved under
inversion. 

\begin{tm}\label{spec}
  Assume that $\Lambda $ is relatively separated and the $v$ is a
  submultiplicative weight satisfying the GRS-condition. 

(i) If $A\in \cv $ and $A $ is invertible on $\ell ^2(\Lambda )$, then
$A\inv \in \cv $.

(ii) Assume in addition that $v$ is subconvolutive. If $A\in \cvi $
and $A $ is invertible on $\ell ^2(\Lambda )$, then 
$A\inv \in \cvi $. 
\end{tm}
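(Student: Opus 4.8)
\emph{Part (i).} The plan is to reduce this assertion to the spectral invariance of a weighted $\ell^1$-group algebra, the GRS condition being the single deep ingredient. First I would dispose of the geometry of $\Lambda$: since $\Lambda$ is relatively separated it meets each unit cube $k+[0,1)^{2d}$, $k\in\zdd$, in at most $N$ points, so after relabelling $\Lambda$ inside $\zdd\times\{1,\dots,N\}$ and padding the missing slots by zero rows and columns one obtains a $*$-embedding of $\cv(\Lambda)$ into the matrix algebra $M_N\big(\cv(\zdd)\big)$ compatible with $\ell^2(\Lambda)\hookrightarrow\ell^2(\zdd)\otimes\bC^N$. Since amplification by $M_N$ neither creates nor destroys inverse-closedness, it is enough to treat $\cv(\zdd)$. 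On the lattice I would decompose a matrix into its diagonals, $A=\sum_{l\in\zdd}T_lD_l$ with $T_l$ the shift by $l$ and $D_l$ bounded diagonal; because $T_lD_{l'}T_l^{-1}$ is again diagonal, this identifies $\cv(\zdd)$ with the twisted $\ell^1_v$-algebra $\ell^1_v\big(\zdd,\ell^\infty(\zdd)\big)$ for the translation action, and here Lemma~\ref{amalg}(A) is used to control $\sum_l\|D_l\|_\infty v(l)$. The decisive point is that this weighted crossed-product algebra is \emph{symmetric}: this is exactly what the GRS condition buys, through the Gelfand--Raikov--Shilov theory of symmetric weighted group algebras over $\zdd$ and its extension to $\ell^\infty(\zdd)$-valued coefficients. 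Granting symmetry, Hulanicki's lemma --- a symmetric Banach $*$-subalgebra sharing the unit of an ambient $C^*$-algebra is inverse-closed in it --- gives $A\inv\in\cv(\zdd)$, and transporting back along the embedding gives $A\inv\in\cv(\Lambda)$. I expect the symmetry of $\ell^1_v(\zdd,\ell^\infty)$ to be the main obstacle; it is the only genuinely hard input, and it is precisely the step at which the failure of the GRS condition for the exponential weight would be fatal.

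\emph{Part (ii).} Here $v$ is in addition subconvolutive; as is needed anyway for $\cvi$ to be a Banach $*$-algebra inside $\cB(\ell^2(\Lambda))$ we also have $v\inv\in\ell^1(\Lambda)$, and then submultiplicativity makes $v\inv$ itself an admissible envelope, i.e.\ $v\inv\in W(C,\ell^1)$. Hence, crucially, $\cvi(\Lambda)\subseteq\cC(\Lambda)$ continuously, where $\cC(\Lambda)$ denotes the unweighted convolution-dominated algebra obtained from $\cv(\Lambda)$ with the trivial weight $v\equiv1$; by Part~(i) applied to $v\equiv1$, the algebra $\cC(\Lambda)$ is inverse-closed in $\cB(\ell^2(\Lambda))$. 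The strategy is then to exhibit $\cvi(\Lambda)$ as a \emph{differential subalgebra} of $\cC(\Lambda)$ and to invoke the transfer principle: a differential subalgebra of an algebra that is inverse-closed in a $C^*$-algebra is itself inverse-closed in that $C^*$-algebra, with quantitative control of the norm of the inverse --- in particular \emph{with no loss of off-diagonal decay}, so that $A\inv$ lands in $\cvi(\Lambda)$ itself. Concretely one must verify a Leibniz-type estimate
\begin{equation}
\label{eq:diffsub}
\|AB\|_{\cvi}\;\le\;C\big(\|A\|_{\cvi}\,\|B\|_{\cC}+\|A\|_{\cC}\,\|B\|_{\cvi}\big)\, ,
\end{equation}
obtained from $(AB)_{\lambda\nu}=\sum_\mu a_{\lambda\mu}b_{\mu\nu}$ and $v(\lambda-\nu)\le v(\lambda-\mu)\,v(\mu-\nu)$ by splitting the $\mu$-sum according to which of $|\lambda-\mu|$ and $|\mu-\nu|$ carries the bulk of $|\lambda-\nu|$, so that on each piece one factor absorbs the whole weight in its $\cvi$-norm while the other is summed off in its (weaker) $\cC$-norm.

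The main obstacle is precisely~\eqref{eq:diffsub}. For a polynomial weight $(1+|z|)^{s}$ the splitting works verbatim, since $|\lambda-\mu|\ge\tfrac12|\lambda-\nu|$ already yields $v(\lambda-\nu)\le 2^{s}v(\lambda-\mu)$; for a sub-exponential weight this pointwise gain is false, and one must either accept a fractional-order version of~\eqref{eq:diffsub} (which still feeds the transfer principle) or first replace $v$ by an equivalent, more regular weight and run the estimate for that one. An equivalent and historically earlier route avoids the abstract machinery altogether: the same near/far splitting --- using subconvolutivity of $v$ on the far part and the Cauchy--Schwarz inequality against the $\ell^2$-operator norm on the near part --- produces a norm-controlled submultiplicativity $\|A^2\|_{\cvi}\le C\,\|A\|_{\cvi}^{2\theta}\|A\|_{\cB}^{2(1-\theta)}$ for some $\theta<1$, and iterating this by a Brandenburg-type argument gives $\rho_{\cvi}(A)\le\rho_{\cB}(A)$, so that Hulanicki's lemma once more yields inverse-closedness.
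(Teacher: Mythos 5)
The paper does not actually prove Theorem~\ref{spec}: it quotes the result from the literature (Baskakov~\cite{Bas90}, Kurbatov~\cite{Kur90}, Gohberg--Kaashoek--Woerdeman~\cite{GKW89}, Sj\"ostrand~\cite{Sjo95} for (i), and Baskakov, Jaffard~\cite{jaffard90}, Gr\"ochenig--Leinert~\cite{GL04a} for (ii)), and the only argument it supplies is the reduction from a relatively separated $\Lambda$ to the lattice $\zdd$ via the re-indexing map of~\cite{BCHL06a}. Your proposal is therefore not so much an alternative to the paper's proof as a reconstruction, in outline, of the proofs contained in the cited sources: your opening reduction (at most $N$ points of $\Lambda$ per unit cube, embedding into $M_N\big(\cv (\zdd )\big)$) is exactly the reduction the paper sketches; for (i), the decomposition into diagonals, the identification of $\cv (\zdd )$ with a weighted algebra $\ell ^1_v\big(\zdd , \ell ^\infty (\zdd )\big)$ for the translation action, symmetry of that algebra under the GRS condition, and Hulanicki's lemma is precisely the Baskakov/Gr\"ochenig--Leinert route (see also the survey~\cite{Gr10}); for (ii), the norm-controlled submultiplicativity $\|A^2\|_{\cvi }\leq C\|A\|_{\cvi }^{2\theta }\|A\|_{\cB (\ell ^2)}^{2(1-\theta )}$ plus Brandenburg iteration plus Hulanicki is the argument of~\cite{GL04a}. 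So the outline is sound and faithful to what the paper relies on. Be aware, though, that the two genuinely deep ingredients are invoked rather than proved: the symmetry of the weighted crossed-product algebra in (i) is a substantial theorem, and in (ii) the Leibniz-type estimate as you state it only works verbatim for polynomial weights --- for general subconvolutive GRS weights (e.g.\ sub-exponential ones) the near/far splitting loses the pointwise gain, and making your ``fractional'' or weight-regularized version precise is exactly where the technical work in~\cite{GL04a} lies; also your transfer principle requires $\cvi $ to be a unital $*$-subalgebra (true here, but worth saying). Two small slips: Lemma~\ref{amalg}(A) concerns the window $g$, not matrices --- over $\zdd$ the identity $\|A\|_{\cv }\asymp \sum _l \|D_l\|_\infty v(l)$ is just the discrete form of the definition of the $\cv$-norm; and relative separation gives an upper bound on the number of points per cube, not that $\Lambda$ ``meets'' every cube.
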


We say that both $\cv $ and $\cvi $  are inverse-closed in $\cB (\ell
^2(\Lambda ))$. Theorem~\ref{spec} has been proved several times and
on several levels of generality. We refer to the original work  of
Baskakov~\cite{Bas90}, Kurbatov~\cite{Kur90},
Gohberg-Kaeshoek-Woerdemann~\cite{GKW89}, and Sj\"ostrand~\cite{Sjo95} for (i),
and to Baskakov~\cite{Bas90}, Jaffard~\cite{jaffard90}, and
~\cite{GL04a} for (ii). 
 The attributions for the algebra $\cv$
are a  bit subtle, because the cited references deal only with the case
when $\Lambda $ is a lattice. The case of a  relatively separated
index set $\Lambda $ follows by a simple reduction described
in~\cite{BCHL06a}: Since $\max \# \big(\Lambda \cap (k+[0,1]^{2d})\big) = N
<\infty$, one  can  define an explicit  map $a: \Lambda \mapsto  \zdd
$  that preserves the off-diagonal decay properties after re-indexing
a given matrix $A$. For the spectral invariance one may  assume
therefore without loss of generality that $\Lambda $
is a lattice. Also, Sj\"ostrand's argument~\cite{Sjo95} works 
for   relatively separated index sets and weights without any
change of the proof.    
An extended survey about spectral invariance  including matrix algebras
can be found in ~\cite{Gr10}.

These matrix classes arise naturally in the analysis of Gabor frames,
as is shown by the following lemma. 
\begin{lemma} \label{mat}
Assume that $\Lambda \subseteq \rdd $ is relatively separated and that
$v$ is a submultiplicative weight on $\rdd $. 

(i) If $g\in M^1_v(\rd )$, then the Gramian $G_{\ggf } $ of $\gablam $
is in $\cv (\Lambda )$. 

(ii) If, in addition, $v$ is subconvolutive and if  $g\in M^\infty _v(\rd
)$, then $G_{\ggf } \in \cvi (\Lambda ) $. 
\end{lemma}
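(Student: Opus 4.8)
\textbf{Proof plan for Lemma~\ref{mat}.}
The plan is to estimate the matrix entries $(G_{\ggf})_{\lambda\mu} = \langle \pi(\mu)g, \pi(\lambda)g\rangle$ directly in terms of the function $z\mapsto \langle g, \pi(z)g\rangle$, and then invoke Lemma~\ref{amalg} to place this function in the right amalgam space. The key point is the covariance of time-frequency shifts: for $\lambda,\mu\in\rdd$ one has $\pi(\lambda)^*\pi(\mu) = c(\lambda,\mu)\,\pi(\mu-\lambda)$ for a unimodular constant $c(\lambda,\mu)$ of modulus one, so that
\begin{equation}
  \label{eq:covariance-gram}
  |(G_{\ggf})_{\lambda\mu}| = |\langle \pi(\mu)g, \pi(\lambda)g\rangle| = |\langle g, \pi(\mu-\lambda)g\rangle| =: |V_gg(\mu-\lambda)| \, .
\end{equation}
Thus the Gramian is, up to absolute values, a ``Toeplitz-type'' matrix governed by the single function $V_gg$ restricted to the difference set $\Lambda-\Lambda\subseteq\rdd$.

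For part (i): given $g\in M^1_v(\rd)$, Lemma~\ref{amalg}(A) (equivalence of (i) and (iii)) tells us that $V_gg = \langle g,\pi(\cdot)g\rangle$ is continuous and lies in $W(C,\ell^1_v)$. Set $\Theta := |V_gg|$, which is then a continuous function in $W(C,\ell^1_v)$ with $\|\Theta\|_{W(C,\ell^1_v)} = \|V_gg\|_{W(C,\ell^1_v)}$ (the amalgam norm only sees $|V_gg|$). By \eqref{eq:covariance-gram} we have $|(G_{\ggf})_{\lambda\mu}| \leq \Theta(\lambda-\mu)$ for all $\lambda,\mu\in\Lambda$, so $\Theta$ is an envelope for $G_{\ggf}$ in the sense of the definition of $\cv(\Lambda)$, and therefore $G_{\ggf}\in\cv(\Lambda)$ with $\|G_{\ggf}\|_{\cv} \leq \|V_gg\|_{W(C,\ell^1_v)} = \|g\|_{M^1_v}$ (using that the $M^1_v$-norm is equivalent to the $W(C,\ell^1_v)$-norm of $V_gg$, again by Lemma~\ref{amalg}(A)).

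For part (ii): with $v$ submultiplicative and subconvolutive and $g\in M^\infty_v(\rd)$, Lemma~\ref{amalg}(B) gives $\sup_{z\in\rdd}|V_gg(z)|\,v(z) < \infty$. Combining this with \eqref{eq:covariance-gram} yields
$$
\sup_{\lambda,\mu\in\Lambda} |(G_{\ggf})_{\lambda\mu}|\, v(\lambda-\mu) = \sup_{\lambda,\mu\in\Lambda} |V_gg(\mu-\lambda)|\, v(\mu-\lambda) \leq \sup_{z\in\rdd}|V_gg(z)|\, v(z) < \infty\, ,
$$
which is precisely the statement that $G_{\ggf}\in\cvi(\Lambda)$, with $\|G_{\ggf}\|_{\cvi}\leq \|g\|_{M^\infty_v}$.

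I do not expect a genuine obstacle here: the content is entirely in the covariance identity \eqref{eq:covariance-gram} plus the two characterizations in Lemma~\ref{amalg}, which are quoted. The only mild care needed is (a) checking that $v$ being a submultiplicative weight on $\rdd$ means $v(\mu-\lambda)\leq v(\mu-\lambda)$ trivially — i.e.\ no loss is incurred in passing from the whole of $\rdd$ to the difference set $\Lambda-\Lambda$ — and (b) noting that relative separation of $\Lambda$ is what guarantees the restriction of a $W(C,\ell^1_v)$-function to $\Lambda-\Lambda$ still produces a summable envelope (a point-evaluation / sampling estimate that is standard for amalgam spaces and Wiener-type norms). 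Both are routine, so the proof is short.
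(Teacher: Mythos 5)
Your argument is correct and is essentially the paper's own proof: both reduce the Gram entries to $|\langle g,\pi(\lambda-\mu)g\rangle|$ via the covariance of time-frequency shifts, take $\Theta = |\langle g,\pi(\cdot)g\rangle|$ as the envelope, and invoke Lemma~\ref{amalg} (part (A) for (i), part (B) for (ii)). The only cosmetic difference is the sign $\mu-\lambda$ versus $\lambda-\mu$, which is immaterial since $|\langle g,\pi(-z)g\rangle| = |\langle g,\pi(z)g\rangle|$.
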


\begin{proof}
Since  
$$
|(G_{g,\Lambda } )_{\lambda , \mu }| = | \langle \pi (\mu )g, \pi
(\lambda )g\rangle | = |\langle g, \pi (\lambda -\mu )g\rangle | \, ,
$$
we may take $\Theta (z)  = |\langle g, \pi (z) g\rangle | $  as an envelope
function. If  $g\in M^1_v(\rd )$, then $\Theta \in  W(C,\ell ^1_v) $
by Lemma~\ref{amalg}.  (ii) is clear from the definitions.  
\end{proof}

\vs

\textbf{Proof of Theorem~\ref{tm-main}.}
Theorem~\ref{tm-main} follows from the combination of several
observations. First an easy lemma.

\begin{lemma}\label{easy}
  Assume that $\gablam $ is a Bessel sequence with bound $B$ and that
  $\kor \neq \{0\}$. If $c\in \kor , \|c\|_2=1$, then for sufficiently
  large $n$ we have
  \begin{equation}
    \label{eq:c8}
    A_n \leq 2B \sum _{\lambda \in \Lambda : |\lambda |>n} |c_\lambda |^2 \, .
  \end{equation}
\end{lemma}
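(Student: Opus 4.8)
The plan is to use a truncation of a kernel element of $D_{\ggf}$ as a competitor in the Rayleigh quotient \eqref{eq:10} that defines $A_n$. Fix $c\in\kor$ with $\|c\|_2=1$, and for $n\in\bN$ write $c^{(n)}$ for the sequence that agrees with $c$ on $\{\lambda\in\Lambda:|\lambda|\le n\}$ and vanishes elsewhere. Since $c\neq 0$ we have $c^{(n)}\neq 0$ for all large $n$, so $c^{(n)}$ is an admissible test sequence in \eqref{eq:10}, and because $c^{(n)}$ is supported in $\{|\lambda|\le n\}$,
\begin{equation*}
  A_n \;\le\; \frac{\bigl\|D_{\ggf}\,c^{(n)}\bigr\|_2^2}{\bigl\|c^{(n)}\bigr\|_2^2}\,.
\end{equation*}
It then remains to estimate numerator and denominator separately.

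The key point for the numerator is that $c$ lies in the kernel. Since $\gablam$ is Bessel, $D_{\ggf}$ extends to a bounded operator on $\ell^2(\Lambda)$ with $\|D_{\ggf}d\|_2^2\le B\|d\|_2^2$ for all $d\in\ell^2(\Lambda)$, and $D_{\ggf}c=0$; hence
\begin{equation*}
  D_{\ggf}\,c^{(n)} \;=\; D_{\ggf}\,c^{(n)} - D_{\ggf}\,c \;=\; -\,D_{\ggf}\bigl(c-c^{(n)}\bigr)\,,
\end{equation*}
and the Bessel bound applied to the tail sequence $c-c^{(n)}$ gives $\bigl\|D_{\ggf}\,c^{(n)}\bigr\|_2^2 \le B\sum_{\lambda\in\Lambda,\,|\lambda|>n}|c_\lambda|^2$. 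For the denominator, $\bigl\|c^{(n)}\bigr\|_2^2 = 1 - \sum_{|\lambda|>n}|c_\lambda|^2$, and since $c\in\ell^2(\Lambda)$ the tail tends to $0$ as $n\to\infty$, so there is an $n_0$ with $\bigl\|c^{(n)}\bigr\|_2^2\ge\tfrac12$ for all $n\ge n_0$. Combining the two bounds yields $A_n\le 2B\sum_{|\lambda|>n}|c_\lambda|^2$ for $n\ge n_0$, which is \eqref{eq:c8}.

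This argument is entirely elementary and I do not expect a genuine obstacle. The only points that deserve a word of care are choosing the threshold $n_0$ so that simultaneously $c^{(n)}\neq 0$ and the ``head'' $c^{(n)}$ carries at least half of the $\ell^2$-mass of $c$, and observing that the Bessel inequality has to be invoked for the genuinely infinite sequence $c-c^{(n)}$ rather than for a finitely supported one — which is exactly the place where the Bessel hypothesis, as opposed to mere well-definedness of $D_{\ggf}$ on finite sequences, enters.
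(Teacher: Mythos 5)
Your proposal is correct and is essentially the paper's argument: the paper likewise writes the head of the kernel element as minus its tail (since $\sum_{\lambda\in\Lambda} c_\lambda\pi(\lambda)g=0$), applies the Bessel bound $B$ to the tail, and uses that $\sum_{|\lambda|\le n}|c_\lambda|^2\ge \tfrac12$ for large $n$ in the Rayleigh quotient defining $A_n$. Your only addition is the explicit care about $c^{(n)}\neq 0$ and about invoking the Bessel inequality for the infinite tail sequence, which the paper leaves implicit.
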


\begin{proof}
We split the sum  $\sum
_{\lambda \in \Lambda } c_\lambda \pi (\lambda )g = 0$ into  two parts
and then take norms. We obtain 
\begin{align*}
  \|\sum _{|\lambda | \leq n} c_\lambda \pi (\lambda )g\|_2^2&=
  \|\sum _{|\lambda | > n} c_\lambda \pi (\lambda )g\|_2^2  
\leq B \sum _{|\lambda | >n} |c_\lambda |^2  \, . 
\end{align*}
For $n$ large enough we have $\sum _{|\lambda |\leq n} |c_\lambda |^2 \geq
\tfrac{1}{2}  $, whence the lower Riesz bound $A_n$ of
$\gablamn$ obeys the  following
estimate:
\begin{equation}
  \label{eq:13}
A_n = \inf _{c\neq 0} \frac{\| \sum _{|\lambda |\leq  n} c_\lambda \pi
  (\lambda )g \|_2^2}{\sum _{|\lambda |\leq n} |c_\lambda | ^2} \leq 
2B   \sum _{|\lambda | > n} |c_\lambda |^2\, .
\end{equation}
This estimate holds for every normalized $c\in  \mathrm{ker}\, D_{g,\Lambda } $. 
  \end{proof}

Lemma~\ref{easy} states the obvious fact that the finite sets
$\gablamn$ become ``more and more linearly dependent'' in the sense
that $A_n \mapsto  0$. 
To estimate the asymptotic behavior of $A_n$ more precisely, we need to construct a  ``bad''
sequence $c$ with fast decay in  $\mathrm{ker}\, D_{g,\Lambda }$. The
possible decay depends on the \tf\ concentration of the window $g$, as
we will prove now. 

\begin{prop} \label{ker}
  If $g\in M^1_v(\rd )$ and $\gablam $ is a frame, but not a Riesz
  basis for $\lrd $,  then
  $\mathrm{ker}\, D_{g,\Lambda } \cap \ell ^1_v(\Lambda ) \neq
  \{0\}$. 
\end{prop}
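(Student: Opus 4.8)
The plan is to exhibit a nonzero element of $\ker D_{g,\Lambda}$ as a column of the Riesz spectral projection onto $\ker G_{g,\Lambda}$, and to use spectral invariance (Theorem~\ref{spec}) to see that this projection --- and hence each of its columns --- inherits the off-diagonal decay governed by $v$. Since $\gablam$ is a frame, it is a Bessel sequence with $g\neq 0$, so $\Lambda$ is automatically relatively separated and Lemmas~\ref{amalg},~\ref{mat} and Theorem~\ref{spec} all apply. First I would pass to the Gramian: because $G_{g,\Lambda}=D_{g,\Lambda}^*D_{g,\Lambda}$ one has $\ker D_{g,\Lambda}=\ker G_{g,\Lambda}$ (if $G_{g,\Lambda}c=0$ then $\|D_{g,\Lambda}c\|_2^2=\langle G_{g,\Lambda}c,c\rangle=0$), and this common kernel is nonzero precisely because $\gablam$ is not a Riesz basis. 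Writing $S_{g,\Lambda}=D_{g,\Lambda}D_{g,\Lambda}^*$ for the frame operator and $A,B$ for the frame bounds, the frame inequalities give $A\,\Id\le S_{g,\Lambda}\le B\,\Id$, hence $\sigma(G_{g,\Lambda})\setminus\{0\}=\sigma(S_{g,\Lambda})\setminus\{0\}\subseteq[A,B]$; thus $0$ is an isolated eigenvalue of the bounded self-adjoint operator $G_{g,\Lambda}$.

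Next I would build the spectral projection onto $\ker G_{g,\Lambda}$ by a contour integral performed \emph{inside the Banach algebra} $\cv(\Lambda)$ rather than merely in $\cB(\ell^2(\Lambda))$. Choose a small positively oriented circle $\gamma$ about $0$ of radius less than $A$, so that $\gamma$ separates $0$ from $\sigma(G_{g,\Lambda})\setminus\{0\}$, and set
\begin{equation*}
P=\frac{1}{2\pi i}\oint_\gamma (zI-G_{g,\Lambda})^{-1}\,dz \, ,
\end{equation*}
the nonzero Riesz projection onto $\ker G_{g,\Lambda}$. By Lemma~\ref{mat}, $G_{g,\Lambda}\in\cv(\Lambda)$ --- this is the point at which the hypothesis $g\in M^1_v(\rd)$, i.e.~\eqref{eq:11} via Lemma~\ref{amalg}, enters; the identity matrix also lies in $\cv(\Lambda)$; and $\cv(\Lambda)$ is inverse-closed in $\cB(\ell^2(\Lambda))$ by Theorem~\ref{spec}(i). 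Consequently the spectra of $G_{g,\Lambda}$ relative to $\cv(\Lambda)$ and to $\cB(\ell^2(\Lambda))$ coincide, for every $z\in\gamma$ the resolvent $(zI-G_{g,\Lambda})^{-1}$ belongs to $\cv(\Lambda)$, and $z\mapsto(zI-G_{g,\Lambda})^{-1}$ is analytic --- in particular $\|\cdot\|_{\cv}$-continuous --- along $\gamma$. Therefore the integral converges in $\cv(\Lambda)$ and $P\in\cv(\Lambda)$.

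Finally I would select the kernel vector. Since $P\neq 0$ and the finitely supported sequences are dense in $\ell^2(\Lambda)$, there is an index $\mu\in\Lambda$ with $c:=Pe_\mu\neq 0$, and then $c\in\mathrm{range}(P)=\ker G_{g,\Lambda}=\ker D_{g,\Lambda}$. The entries of $c$ form the $\mu$-th column of $P$, namely $c_\lambda=P_{\lambda\mu}$, so membership $P\in\cv(\Lambda)$ furnishes an envelope $\Theta\in W(C,\ell^1_v)$ with $|c_\lambda|\le\Theta(\lambda-\mu)$ for all $\lambda\in\Lambda$. Submultiplicativity $v(\lambda)\le v(\mu)\,v(\lambda-\mu)$ and relative separatedness of $\Lambda$ then give
\begin{equation*}
\sum_{\lambda\in\Lambda}|c_\lambda|\,v(\lambda)\le v(\mu)\sum_{\lambda\in\Lambda}\Theta(\lambda-\mu)\,v(\lambda-\mu)\le C\,v(\mu)\,\|\Theta\|_{W(C,\ell^1_v)}<\infty \, ,
\end{equation*}
so that $c\in\ell^1_v(\Lambda)$, which is what we had to produce.

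The step I expect to be the crux is the passage from the $\cB(\ell^2)$-valued functional calculus to one taking values in $\cv(\Lambda)$: spectral invariance must be used not only to locate each resolvent $(zI-G_{g,\Lambda})^{-1}$ in $\cv(\Lambda)$ but also to upgrade analyticity of $z\mapsto(zI-G_{g,\Lambda})^{-1}$ to the stronger $\cv$-norm, so that the contour integral --- and hence $P$ --- genuinely lies in $\cv(\Lambda)$. The remaining ingredients --- the identity $\ker D_{g,\Lambda}=\ker G_{g,\Lambda}$, the isolation of $0$ in $\sigma(G_{g,\Lambda})$, and the weighted $\ell^1$ estimate for the chosen column --- are routine.
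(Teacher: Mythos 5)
Your proposal is correct and follows essentially the same route as the paper: reduce to $\ker G_{g,\Lambda}$, use the frame property to isolate $0$ in $\sigma(G_{g,\Lambda})$, express the Riesz projection onto the kernel as a contour integral of the resolvent, invoke spectral invariance (Theorem~\ref{spec}) to place the resolvent, hence $P$, in $\cv(\Lambda)$, and take a nonzero column of $P$ as the desired element of $\ker D_{g,\Lambda}\cap\ell^1_v(\Lambda)$. The only differences are cosmetic: you spell out the convergence of the integral in the $\cv$-norm and the weighted $\ell^1$ estimate for the column slightly more explicitly than the paper does.
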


\begin{proof}
  1. Recall that $G_{g,\Lambda } = D_{g,\Lambda } ^* D_{g,\Lambda }$
  is the Gramian operator associated to $\gablam $. 
Consequently, $c\in \mathrm{ker} \, D_{g,\Lambda } $ \fif\
$\|D_{g,\Lambda } c\|_2^2 = \langle G_{g,\Lambda } c,c \rangle = 0$
\fif\ $c\in \mathrm{ker} \, G_{g,\Lambda } $. \\

2. To relate the spectrum of the frame operator $S_{\ggf } $ on $\lrd
$ and of $G_{\ggf } $ on $\ell ^2(\Lambda )$, we use the identity 
$$
\sigma (S_{g,\Lambda }) \cup \{0\} = \sigma (D_{g,\Lambda }
D^*_{g,\Lambda }) \cup \{0\} =   \sigma (D^*_{g,\Lambda }
D_{g,\Lambda }) \cup \{0\} = \sigma (G_{g,\Lambda }) \cup \{0\} \, ,
$$
which follows from a purely  algebraic manipulation~\cite[p.\
199]{conway90}. 

From this identity  we draw the following conclusions: Since $\gablam
$ is a frame, we have   $\sigma
(S_{g,\Lambda }) \subseteq [A,B]$ for $A,B>0$. Since $\gablam $ is not
a Riesz basis, $\mathrm{ker}\, G_{\ggf } \neq \{0\}$ and thus $0\in
\sigma (G_{\ggf} )$. Consequently, 
\begin{equation}
  \label{eq:c9}
  \sigma (G_{\ggf} ) \subseteq \{0\}  \cup [A,B] \, .
\end{equation}
 The main point is the spectral gap between $0$ and $A$. 

3. We now apply an argument developed by Baskakov~\cite{Bas97a}
 to show that the
orthogonal projection onto the kernel of $G_{\ggf } $ is  a matrix
with off-diagonal decay. Let $P$ be the orthogonal projection from
$\ell ^2(\Lambda )$ onto $\mathrm{ker}\, G_{\ggf }$.  With the Riesz
functional calculus~\cite{conway90},  this  projection
 can be written as 
\begin{equation}
  \label{eq:6}
  P  = \frac{1}{2\pi i} \int _\gamma (z I - G_{g,\Lambda
  })\inv \, dz \, ,
\end{equation}
where $\gamma $ is a closed curve in $\bC $ around $0$ disjoint from
the interval $[A,B]$, for instance $\gamma (t) = \frac{A}{2} e^{2\pi i
  t},  t\in [0,1]$. 

4. Spectral invariance: By Lemma~\ref{mat} $G_{\ggf }$ and
$z\mathrm{I} - G_{\ggf }$ are matrices in $ \cC _v $. Since  $zI-
G_{g,\Lambda }$  is  invertible for $z\in \gamma $, Theorem~\ref{spec}
implies that  $(zI - G_{g,\Lambda })\inv  $ is also in $\cC _v$. From
the 
continuity of the resolvent function  $z\mapsto  (zI - G_{g,\Lambda }
)\inv $  we conclude that $\sup _{z\in \gamma } \|(zI-G_{g,\Lambda
})\inv \|_{\cC _v} <\infty $. Consequently,  the integral defining the
orthogonal projection onto the kernel of $G_{g,\Lambda } $  is  
in the algebra of convolution-dominated matrices $\cC _v$: 
$$
P  \in \cC _v \, .
$$
This means that there exists an envelope $\Theta \in W(C,\ell ^1_v)$,
such that $|P_{\lambda \mu } | \leq \Theta (\lambda -\mu )$.  If  $\{ e_\lambda : \lambda \in \Lambda \}$ with  $e_\lambda (\mu ) =
\delta _{\lambda , \mu }$ denotes  the standard orthonormal basis of $\ell ^2(\Lambda
)$, then 
$$
|\langle e_\lambda , Pe_\mu \rangle | = |P_{\lambda , \mu } | \leq \Theta (\lambda -\mu ) \, , 
$$
or, equivalently, $Pe_\mu \in \ell ^1_v (\Lambda )$ for all $\mu \in
\Lambda $. As the projection $P$ is non-zero by assumption, $Pe_\mu
\neq 0$ for some $\mu$, and thus we have 
found a non-trivial vector in $\mathrm{ker}\, G_{\ggf } \cap \ell ^1_v
= \kor \cap \ell ^1_v $, and
we are done.  
\end{proof}

Combining Lemma~\ref{easy} and Proposition~\ref{ker}, 
we now can conclude the proof of Theorem~\ref{tm-main}. 
Choose an $\ell ^2$-normalized  $c\in \mathrm{ker}\, D\ggf \cap \ell ^1_v(\Lambda
)$. 
Then by \eqref{eq:13} we obtain that 
\begin{align}
A_n  &\leq 2  B \sum _{|\lambda | >n} |c_\lambda |^2 \notag \\
&\leq 2 B \sup _{|\lambda | >n} v(\lambda  )^{-2} \,  \sum _{|\lambda
  | >n} |c_\lambda |^2 v(\lambda )^2  \notag \\
& \leq 2 B \sup _{|\lambda | >n} v(\lambda  )^{-2} \,  \sum _{|\lambda
  | >n} |c_\lambda | v(\lambda )  = C \sup _{|\lambda | >n} v(\lambda
)^{-2}  \, . \label{final}
\end{align}
Theorem~\ref{tm-main} is proved completely. \hfill  $\Box$

The same proof yields the following variation of
Theorem~\ref{tm-main}. 

\begin{tm} \label{tm-mainb}
Let $v$ be a submultiplicative and subconvolutive  weight function
satisfying  the Gelfand-Raikov-Shilov condition.

Assume that $g\in M^\infty _v(\rd )$ and that   $\gablam $ is a frame
for $\lrd $,  but not   a Riesz basis.  Then
the lower Riesz bound $A_n$ of $\gablamn$ decays like 
\begin{equation}
  \label{eq:3a}
  A_n \leq C \sum _{|\lambda | >n}  v(\lambda )^{-2} \, .
\end{equation}
\end{tm}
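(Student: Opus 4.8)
The plan is to re-run the proof of Theorem~\ref{tm-main} with the convolution-dominated algebra $\cv $ replaced everywhere by the Jaffard-type algebra $\cvi $, invoking part~(ii) of Theorems~\ref{spec} and~\ref{mat} in place of part~(i). First I would note that, since $v$ is submultiplicative and subconvolutive and $g\in M^\infty _v(\rd )$, Lemma~\ref{mat}(ii) gives $G_{\ggf }\in \cvi (\Lambda )$, so that $|(G_{\ggf })_{\lambda \mu }|\le C\, v(\lambda -\mu )\inv $. As in Proposition~\ref{ker}, the frame hypothesis together with $\gablam $ not being a Riesz basis yields $\kor =\mathrm{ker}\, G_{\ggf }\neq \{0\}$ and the spectral gap $\sigma (G_{\ggf })\subseteq \{0\}\cup [A,B]$; hence the orthogonal projection $P$ of $\ell ^2(\Lambda )$ onto $\mathrm{ker}\, G_{\ggf }$ is represented by the Riesz integral $P=\frac{1}{2\pi i}\int _\gamma (zI-G_{\ggf })\inv \, dz$ over a small circle $\gamma $ around $0$ avoiding $[A,B]$.

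The one place where part~(ii) enters is spectral invariance: by Theorem~\ref{spec}(ii), which applies precisely because $v$ is submultiplicative, subconvolutive and satisfies the GRS condition, $(zI-G_{\ggf })\inv \in \cvi $ for each $z\in \gamma $, and continuity of the resolvent gives $\sup _{z\in \gamma }\|(zI-G_{\ggf })\inv \|_{\cvi }<\infty $. Therefore $P\in \cvi $, i.e.\ $|P_{\lambda \mu }|\le C'\, v(\lambda -\mu )\inv $. Since $P\neq 0$, I can choose $\mu \in \Lambda $ with $Pe_\mu \neq 0$ (with $e_\mu $ the standard basis vector of $\ell ^2(\Lambda )$) and put $c=Pe_\mu /\|Pe_\mu \|_2\in \mathrm{ker}\, D_{\ggf }$; then $\|c\|_2=1$ and $|c_\lambda |\le C''\, v(\lambda -\mu )\inv $. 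Note that here one only controls $c$ pointwise by $v\inv $, not by an $\ell ^1_v$-bound as in the proof of Theorem~\ref{tm-main}, which is exactly why the conclusion is the $\ell ^2$-tail $\sum _{|\lambda |>n}v(\lambda )^{-2}$ rather than the stronger $\sup _{|\lambda |>n}v(\lambda )^{-2}$.

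Finally I would feed this $c$ into Lemma~\ref{easy}: for $n$ large enough,
\begin{align*}
  A_n\le 2B\sum _{|\lambda |>n}|c_\lambda |^2\le 2B\,(C'')^2\sum _{|\lambda |>n}v(\lambda -\mu )^{-2}\le 2B\,(C'')^2 v(\mu )^2\sum _{|\lambda |>n}v(\lambda )^{-2},
\end{align*}
where the last inequality uses submultiplicativity in the form $v(\lambda )\le v(\lambda -\mu )v(\mu )$, i.e.\ $v(\lambda -\mu )\inv \le v(\mu )v(\lambda )\inv $, with $\mu $ fixed. Taking $C=2B\,(C'')^2 v(\mu )^2$ gives~\eqref{eq:3a}; the right-hand side is finite, with tails tending to $0$, because subconvolutivity forces $v\inv \in \ell ^1(\Lambda )$. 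I do not expect a genuine obstacle: the argument is bookkeeping built on the already-cited Theorem~\ref{spec}(ii), and the only points that need attention are that the hypotheses of Theorem~\ref{tm-mainb} are exactly those under which $\cvi $ is inverse-closed, and the harmless shift by $\mu $ in the final tail estimate.
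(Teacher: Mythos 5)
Your proposal is correct and follows essentially the same route as the paper: replace Lemma~\ref{mat}(i) and Theorem~\ref{spec}(i) by their $M^\infty_v$/$\cvi$ counterparts, use the Riesz projection onto $\mathrm{ker}\, G_{\ggf}$ to produce a kernel vector with $|c_\lambda|\lesssim v(\lambda)^{-1}$ (the paper phrases this as $\kor\cap\ell^\infty_v(\Lambda)\neq\{0\}$, and your shift-by-$\mu$ submultiplicativity step is exactly the detail that justifies it), and then feed it into Lemma~\ref{easy} via $\sum_{|\lambda|>n}|c_\lambda|^2\le \sup_{|\lambda|>n}|c_\lambda|^2v(\lambda)^2\sum_{|\lambda|>n}v(\lambda)^{-2}$. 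The only caveat is your closing aside that subconvolutivity alone gives $v\inv\in\ell^1(\Lambda)$; what matters for the finiteness of the right-hand side is summability of $v^{-2}$, but this side remark does not affect the validity of \eqref{eq:3a}.
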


\begin{proof}
  The proof is similar,  we just use the versions of Lemma~\ref{mat}  and
  Theorem~\ref{spec} that are valid for $M^\infty _v(\rd
  )$. Instead of Proposition~\ref{ker} we use the following statement: 
If  $g\in M^\infty _v(\rd )$ and $\gablam $ is a frame, but not a
Riesz basis for $\lrd $, then $\kor \cap \ell ^\infty _v (\Lambda )
\neq \{0\}$.   
Equation~\eqref{final} is replaced by 
\begin{align*}
A_n &\leq 2B \sum _{|\lambda | >n} |c_\lambda |^2 \notag \\
&\leq 2 B \sup _{|\lambda | >n} |c_\lambda |^2  v(\lambda  )^{2} \,  \sum _{|\lambda
  | >n}  v(\lambda )^{-2} \, .
\end{align*}
\end{proof}

\rems\ 1.  Note the  importance of assumptions: $\gablam $ must be a frame so that
there exists a spectral gap for the Gramian. Theorem ~\ref{tm-main} fails,
when $\gablam $ is not a frame and the spectral gap is missing. 
This may be  the case for Gabor systems  at the critical density, for
instance, with $\phi (t) = e^{-\pi t^2}$ the Gabor system $\cG (\phi ,
\bZ ^2)$ is neither a frame nor a Riesz basis (but still complete in
$L^2(\bR )$). In this case, the asymptotic decay of the lower Riesz
bound $A_n$ can  be investigated with different methods,
see~\cite{Ban14}. 

2. Theorem~\ref{tm-main} quantifies the degree of linear dependence of
the finite sets $\gablamn $. Note that good \tf\ localization of $g$ (corresponding to fast
growth of $v$) yields a faster decay of the constants $A_n$. This is
somewhat counter-intuitive, because the fast decay of $z\mapsto   \langle g
, \pi (z) \phi \rangle  $  implies  that  the function $z \mapsto  |\langle
g , \pi (z) \phi \rangle |^2$  is sharply peaked in 
$\rdd $, and shifts of sharply peaked bumps (corresponding to the
\tfs s  of $\pi (\lambda )g$) tend to be linearly independent with
good constants. 
According to Theorem~\ref{tm-main} this is not the case
here. This  phenomenon indicates the existence of
subtle  cancellations in linear combinations of \tfs s  and  seems to
be yet another  manifestation of the \up . 

3. To obtain an upper estimate for $A_n$, we needed to find only a \emph{single}
sequence $c \in \ell ^2(\Lambda )$ such that 
$ \|\sum _{|\lambda |\leq n} c_\lambda \pi (\lambda )g\|_2^2  \approx
A_n \|c\|_2^2$.  In the course of the proof we have
constructed such a sequence by using the spectral invariance and the
properties of the basis function $g$. 

It is natural to ask whether the decay rate of $A_n$ in Theorem~\ref{tm-main}
is best possible.   This question, however, is much more
difficult, because it amounts  to showing that 
$ \|\sum _{|\lambda |\leq n} c_\lambda \pi (\lambda )g\|_2^2 \geq
\mathrm{const} \, A_n
\|c\|_2^2$ for \emph{all} $c$. Since every finite set of \tfs s can be
extended to a Gabor frame,  this statement seems 
equivalent to the original linear independence conjecture.

4. If $v$ is an exponential weight, $v(z) = e^{a|z|}$ for some $a>0$,
then  the matrix algebras $\cv $ and $\cvi $ are no longer
inverse-closed in $\cB (\ell ^2(\Lambda ))$. The statement of Theorem~\ref{spec} is
false and has to be replaced by a weaker version. Nevertheless one can
show~\cite{Ban14} that for $g\in M^1_v$ with exponential weight $v(z)
= e^{a|z|}$  the lower Riesz
bound decays exponentially $A_n \lesssim e^{-\epsilon n}$ for some
$\epsilon >0$. 

5. In our analysis we have only used that $\gablam $ is a frame with
$\kor \neq \{0\}$ and the decay properties of the Gramian $G_{\ggf
}$. The statement about the asymptotic behavior of the lower Riesz
bound $A_n$ carries over without change to general localized
frames~\cite{FoG05} 
indexed by a relatively separated subset of $\rdd $.

\def\cprime{$'$} \def\cprime{$'$} \def\cprime{$'$} \def\cprime{$'$}
  \def\cprime{$'$} \def\cprime{$'$}


\end{document}